\documentclass[reqno,11pt]{amsart}
\textheight22cm \topmargin-0.3cm \oddsidemargin7mm
\evensidemargin7mm \textwidth14cm \headsep0.8cm \headheight0.4cm
\numberwithin{equation}{section}
\usepackage{amsmath}
\usepackage{esint} 
\usepackage{amsthm}
\usepackage{epsfig}
\usepackage{psfrag}
\usepackage{graphicx}
\graphicspath{{figure/}}
\usepackage{graphpap,latexsym,epsf}
\usepackage{color}
\usepackage{amssymb,mathrsfs,enumerate}
\usepackage{mathtools}
\definecolor{citegreen}{rgb}{0,0.6,0}
\definecolor{refred}{rgb}{0.8,0,0}
\usepackage[colorlinks, citecolor=citegreen, linkcolor=refred]{hyperref}

\def\RRR{{\mathrm R}}

\def\a{\alpha}
\def\b{\beta}

\newcommand{\pa}{\partial}


\newcommand{\Ric}{{\rm Ric}}

\newcommand{\De}{\Delta}


\newcommand{\na}{\nabla}
\newcommand{\nana}{\nabla^2}

\mathchardef\emptyset="001F

\definecolor{vgreen}{rgb}{0.1,0.5,0.2}
\definecolor{viola}{RGB}{85,26,139}

\newtheorem{theorem}{Theorem}[section]
\newtheorem{remark}{Remark}

\newtheorem{proposition}[theorem]{Proposition}

\newtheorem{lemma}[theorem]{Lemma}



\addtolength{\textwidth}{2.4cm}
\addtolength{\textheight}{2cm}
\addtolength{\voffset}{-1cm}
\hoffset=-19pt
\addtolength{\footskip}{1cm}


\author[S.~Borghini]{Stefano Borghini}
\address{S.~Borghini, Universit\`a degli Studi di Trento,
via Sommarive 14, 38123 Povo (TN), Italy}
\email{stefano.borghini@unitn.it}

\author[L.~Mazzieri]{Lorenzo Mazzieri}
\address{L.~Mazzieri, Universit\`a degli Studi di Trento,
via Sommarive 14, 38123 Povo (TN), Italy}
\email{lorenzo.mazzieri@unitn.it}

\begin{document}

\hyphenation{ma-ni-fold}

\title[Counterexamples to a divergence lower bound for the covariant derivative of skew-symmetric 2-tensor fields
]{Counterexamples to a divergence lower bound for the covariant derivative of skew-symmetric 2-tensor fields
}

\begin{abstract} 
In~\cite{Hwang_Yun} an estimate for suitable skew-symmetric $2$-tensors was claimed. Soon after, this estimate has been exploited to claim powerful classification results: most notably, it has been employed to propose a proof of a Black Hole Uniqueness Theorem for vacuum static spacetimes with positive scalar curvature~\cite{Xu_Ye} and in connection with the Besse Conjecture~\cite{Hwang_Yun_1}. In the present note we point out an issue in the argument proposed in~\cite{Hwang_Yun} and we provide a counterexample to the estimate. 
\end{abstract}

\maketitle


%

\maketitle

\section{Introduction}

The Black Hole Uniqueness Theorem for three-dimensional static solutions with positive scalar curvature and the Besse Conjecture for solutions to the Critical Point Equation are two very famous and related open problems in contemporary geometric analysis. Very recently, some very remarkable advances have been claimed on both of these problems in a series of papers~\cite{Hwang_Santos_Yun,
Hwang_Yun_2,Hwang_Yun, Xu_Ye,Gabjin_Seungsu,Hwang_Yun_1}.
In this short note, we point out an issue in the approach proposed in the above mentioned papers, providing counterexamples.

To introduce the problems of interest together with some notation, let us recall that a three-dimensional static solution is a triple $(M,g,f)$ satisfying
\begin{equation}
\label{eq:Static}
f \Ric \,=\,\nana f+ \frac{\RRR}{2}\,  f \,g\,,\qquad\De f\,=\,-\frac{\RRR}{2}f \, ,
\end{equation}
where $(M,g)$ is a Riemannian manifold, $f$ is a smooth function and $\Ric$ and $\RRR$ denote the Ricci tensor and the scalar curvature of $g$, respectively. When $\RRR$ is positive, it is natural to suppose that $(M,g)$ is a compact manifold with boundary and that $f$ is vanishing on the boundary.
A strictly related problem is the so called Critical Point Equation, which consists in the following system 
\begin{equation}
\label{eq:Besse}
(1+f) \left( \Ric - \frac{\RRR}{n} \, g \right)\,=\,\nana f+\frac{\RRR}{n(n-1)}\,g\,,\qquad\De f\,=\,-\frac{\RRR}{n-1}f
\end{equation}
where the unknowns are given by the triple $(M, g, f)$, with $(M,g)$ a closed Riemannian manifold and $f$ a smooth function.

In~\cite{Hwang_Yun}, the authors aim at classifying solutions to the Critical Point Equation subject to the condition of having Positive Isotropic Curvature. To this end, they consider the differential $2$-form 
$$
\omega\,=\,df\wedge \iota_{\na f}z\,,
$$
where $z$ indicates the traceless Ricci tensor, and they claim that it must vanish. Notice that, using~\eqref{eq:Besse}, the differential $2$-form $\omega$ can be rewritten as
\begin{equation*}
\omega\,=\,\frac{1}{2(1+f)}df\wedge d|\na f|^2\,,
\end{equation*}
where $|\cdot|$ is the norm computed with respect to the metric $g$. If $\omega \equiv 0$, then, using again the equation~\eqref{eq:Besse}, one can prove that the Cotton tensor of $g$ must also vanish, by a direct computation. 
It follows that either $n=3$ and $g$ is Locally Conformally Flat, or else $n \geq 4$ and $g$ has harmonic Weyl tensor. In both cases, the classification follows easily. The same strategy is adopted in~\cite{Xu_Ye}\footnote{Notice that this reference has been withdrawn by the authors during the preparation of the present note.}, where this time the differential $2$-form $\omega$ is defined as
\begin{equation*}
\omega\,=\,\frac{1}{2f}df\wedge d|\na f|^2\,,
\end{equation*}
with $g$ and $f$ satisfying~\eqref{eq:Static}. In both cases, the vanishing of $\omega$ is deduced through an integration by parts argument -- which we describe in Subsection~\ref{sub:byparts} below, in the case of static metrics -- making a substantial use of the key estimate
\begin{equation}
\label{eq:main}
|\na\omega|^2\,\geq\,|\delta\omega|^2\, \, ,
\end{equation}
which the authors claim to hold at all points of $M$ where $\omega$ is not vanishing (see Lemma 5.5 in~\cite{Hwang_Yun}).
The proposed proof of~\eqref{eq:main} does not make use of the full strength of either~\eqref{eq:Static} or~\eqref{eq:Besse}. In fact, it is based on a local computation, in which the global structure of $M$ is not playing any role. As such, if correct, it should work for every differential $2$-form having the structure
\begin{equation}
\label{eq:Pi}
\omega \,=\,\lambda(f)\,df\wedge d|\na f|^2\,.
\end{equation}
for some smooth function $\lambda=\lambda(f)$, independently of the validity of~\eqref{eq:Static} or~\eqref{eq:Besse}. Aim of the present note is to disprove the claim that every $\omega$ as in~\eqref{eq:Pi}, defined on an open subset of a Riemannian manifold $(M,g)$, satisfies estimate~\eqref{eq:main}. 

In Section~\ref{sec:issue} we point out the issue in the original proof of~\eqref{eq:main}, given in~\cite[Lemma~5.5]{Hwang_Yun}, whereas in Section~\ref{sec:counterexample} we provide effective counterexamples to the claim. Namely, we show that

\medskip

\noindent {\em 
For every smooth real function $\lambda \not\equiv 0$, there exist a smooth Riemannian metric $g$ and a smooth function $f$ such that $|\nabla \omega|^2 < |\delta \omega|^2$, with $\omega =\,\lambda(f)\,df\wedge d|\na f|^2$.
}

\medskip

For the sake of completeness, we discuss in Section~\ref{sec:properties} how the validity of an estimate like~\eqref{eq:main} can be exploited to deduce that $\omega$ must vanish everywhere.

\section{Analysis of a skew-symmetric $2$-tensor field}
\label{sec:properties}

To make our computations more transparent, we prefer to work with the tensor-fields formalism. However one can also work with the formalism of differential forms as done in~\cite{Hwang_Yun}. Instead of $\omega$ defined as in~\eqref{eq:P}, we consider the skew-symmetric $2$-tensor field $P$, given by
\begin{equation}
\label{eq:P}
P \,=\,\lambda(f)\, \left[ df \otimes d|\na f|^2 - d|\na f|^2  \otimes df \right] \,,
\end{equation}
with $\lambda$, $f$ and $g$ as above. In this formalism, we have that estimate~\eqref{eq:main} is equivalent to
\begin{equation}
\label{eq:mainP}
|\na P|^2\,\geq\, 2 \, | {\rm div} P|^2\, \, ,
\end{equation}
as $2 \, |\na \omega|^2 = |\na P|^2$ (the factor two comes from the slight difference in the definition of norms on differential forms and tensor, namely $|\na\omega|^2=\sum_{j<k}\sum_i (\na_i\omega_{jk})^2$, whereas $|\na P|^2=\sum_{j,k}\sum_i (\na_i P_{jk})^2$) and $ \delta \omega = - {\rm div} P$. Notice that, replacing the constant $2$ with the smaller constant $1/n$, one gets the always valid lower bound $|\na P|^2\,\geq\, (1/n) \, | {\rm div} P|^2$. Furthermore, exploiting the special structure~\eqref{eq:P} of $P$, one can significantly improve on this bound, obtaining $(n-1)|\na P|^2\,\geq\, 2 \, | {\rm div} P|^2$ (see the appendix). On the other hand, estimate~\eqref{eq:mainP} is too strong and cannot hold in general, as we will discuss below.

\subsection{Two differential identities.}
Here we discuss some basic though fundamental properties of a skew-symmetric $2$-tensor $P$ having the form~\eqref{eq:P}. 

\begin{proposition}
\label{pro:closed}
Let $(M,g)$ be a $n$-dimensional Riemannian manifold and let $f\in\mathscr{C}^\infty(M)$. Then, the skew-symmetric $2$-tensor field $P$ defined as in~\eqref{eq:P}, for some smooth real function $\lambda$, satisfies the identity
\[
\na P(X,Y,Z)\,+\,\na P(Y,Z,X)\,+\,\na P(Z,X,Y)\,=\,0\,.
\]
\end{proposition}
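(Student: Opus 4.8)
The plan is to verify the cyclic identity directly from the definition~\eqref{eq:P}, exploiting the fact that $P$ is built out of exterior differentials. Write $u = f$ and $v = |\na f|^2$ for brevity, so that $P = \lambda(f)\,(du\otimes dv - dv\otimes du)$. The covariant derivative then splits into a term where the derivative hits $\lambda(f)$ and a term where it hits the tensorial factor:
\begin{equation*}
\na P \,=\, \lambda'(f)\, df\otimes(du\otimes dv - dv\otimes du) \,+\, \lambda(f)\,\na(du\otimes dv - dv\otimes du)\,.
\end{equation*}
I would handle these two pieces separately, showing that each one is annihilated by the cyclic sum over the three arguments.

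For the first piece, evaluating on $(X,Y,Z)$ gives $\lambda'(f)\, df(X)\,[du(Y)dv(Z) - dv(Y)du(Z)]$, i.e.\ $\lambda'(f)$ times $X(u)\,[Y(u)Z(v) - Y(v)Z(u)]$ since $u=f$. Summing cyclically over $(X,Y,Z)$ produces $\lambda'(f)$ times a totally antisymmetric-looking combination; expanding the three cyclic terms one sees that each monomial $X(u)Y(u)Z(v)$ is cancelled by the monomial with opposite sign coming from another cyclic position (concretely, the $+X(u)Y(u)Z(v)$ from the first term cancels the $-Z(u)X(u)Y(v)$-type contributions after relabelling, using the symmetry $X(u)Y(u)=Y(u)X(u)$). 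The net result is zero. For the second piece, the key point is that $\na(du\otimes dv)$ has the form $\na du\otimes dv + du\otimes\na dv$ where $\na du = \nana f$ and $\na dv = \nana v$ are \emph{symmetric} $2$-tensors (Hessians). So $\na(du\otimes dv - dv\otimes du)$ evaluated at $(X,Y,Z)$ equals $\nana f(X,Y)\,dv(Z) + df(Y)\,\nana v(X,Z) - \nana v(X,Y)\,df(Z) - dv(Y)\,\nana f(X,Z)$. Taking the cyclic sum over $(X,Y,Z)$ and using the symmetry of the two Hessians in their arguments, the terms pair up and cancel: for instance the cyclic sum of $\nana f(X,Y)\,dv(Z)$ cancels against the cyclic sum of $-dv(Y)\,\nana f(X,Z)$ because a cyclic permutation turns one into the other up to the Hessian's symmetry.

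The one point requiring a little care — and the only place where this is not pure bookkeeping — is making sure that the first-jet term involving $\lambda'(f)$ really does cancel, since a priori it mixes first derivatives of $f$ with first derivatives of $v=|\na f|^2$ and there is no symmetry of a Hessian to invoke there; the cancellation comes purely from the skew-symmetric bracket structure $du\otimes dv - dv\otimes du$ together with the fact that $df = du$ is the \emph{same} one-form that appears contracted with $X$. I expect this to be the main (though still minor) obstacle: one must resist the temptation to treat $df$, $du$, $dv$ as independent and instead use $u=f$ so that two of the three slots in each cyclic term carry a differential of the same function, which is exactly what forces the antisymmetrization to vanish. An alternative, perhaps cleaner, route would be to observe that $P$ corresponds to the $2$-form $\omega = \lambda(f)\,df\wedge dv$, which is exact up to the factor $\lambda(f)$; writing $\lambda(f)\,df = d\Lambda(f)$ with $\Lambda' = \lambda$, we get $\omega = d\Lambda(f)\wedge dv = d(\Lambda(f)\,dv)$, so $\omega$ is exact, hence closed, and $d\omega = 0$ is precisely the stated cyclic identity once one translates $d\omega$ into covariant-derivative notation (the Christoffel symbols drop out by antisymmetry). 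I would likely present the direct tensorial computation as the main argument and mention the "$\omega$ is exact" observation as the conceptual reason.
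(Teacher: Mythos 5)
Your proposal is correct and follows essentially the same route as the paper: split $\na P$ into the first-order piece carrying $\lambda'(f)\,df$ and the Hessian pieces, then kill each cyclic sum separately — the former by the repeated $df$ factor, the latter by the symmetry of $\nana f$ and $\nana |\na f|^2$ — which is exactly the paper's normal-coordinate computation written invariantly. Your closing observation that $\omega=\lambda(f)\,df\wedge d|\na f|^2$ is closed (indeed exact) is also precisely the content of the remark the authors place immediately after the proposition.
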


\begin{remark}
In terms of the differential $2$-form $\omega$ defined as in~\eqref{eq:Pi}, the above identity is telling us that $\omega$ is closed, as observed in~\cite[Lemma~5.4]{Hwang_Yun}. Observe that, if $\omega$ is as in~\eqref{eq:Pi}, then it is straightforward to realize that $d\omega = (d\lambda/df) \, df \wedge df \wedge d |\na f|^2 = 0$.
\end{remark}

\begin{proof}
For simplicity we work with normal coordinates $\{x^1,\dots,x^n\}$. A simple computation gives
\begin{align*}
\na_i P_{jk}\,=\,\frac{\dot\lambda}{\lambda}P_{jk}\na_i f+\lambda\left(\nana_{ij}f\na_k|\na f|^2-\na_j |\na f|^2 \nana_{ik} f\right)+\lambda\left(\na_j f\nana_{ik}|\na f|^2-\nana_{ij}|\na f|^2\na_k f\right)\,.
\end{align*}
It is now a matter of computation to check that the sums over rotating indexes of the three pieces on the right hand side give zero. We compute
\begin{multline*}
P_{jk}\na_i f+P_{ki}\na_j f+P_{ij}\na_k f\,=\,
\lambda\Big(\na_i f\na_j f\na_k|\na f|^2-\na_i f\na_j|\na f|^2\na_k f
\\
+\na_j f\na_k f\na_i|\na f|^2-\na_j f\na_k|\na f|^2\na_i f+\na_k f\na_i f\na_j|\na f|^2-\na_k f\na_i|\na f|^2\na_j f
\Big)\,=\,0\,.
\end{multline*}
Similarly, one has
\[
\nana_{ij}f\na_k|\na f|^2-\na_j |\na f|^2 \nana_{ik} f+
\nana_{jk}f\na_i|\na f|^2-\na_k |\na f|^2 \nana_{ji} f+
\nana_{ki}f\na_j|\na f|^2-\na_i |\na f|^2 \nana_{kj} f
=0\,, \]
\[
\na_j f\nana_{ik}|\na f|^2-\nana_{ij}|\na f|^2\na_k f+
\na_k f\nana_{ji}|\na f|^2-\nana_{jk}|\na f|^2\na_i f+
\na_i f\nana_{kj}|\na f|^2-\nana_{ki}|\na f|^2\na_j f
=0\,.
\] 

It follows then that
\[
\na_i P_{jk}+
\na_j P_{ki}+
\na_k P_{ij}\,=\,0\,,
\]
as claimed.
\end{proof}

Another interesting property of $P$ is that it satisfies a Bochner-type formula, as it is established in the following proposition.

\begin{proposition}
Let $(M,g)$ be a $n$-dimensional Riemannian manifold and let $f\in\mathscr{C}^\infty(M)$. Then, the skew-symmetric $2$-tensor field $P$ defined as in~\eqref{eq:P}, for some smooth real function $\lambda$, satisfies the identity
$$
\frac{1}{2}\De|P|^2\,=\,|\na P|^2+2\langle P\,|\,\na ({\rm div}\,P)\rangle+\frac{2\RRR}{(n-1)(n-2)}|P|^2+2\frac{n-4}{n-2}\RRR_{js}P_{sk} P_{jk}+2{\rm W}_{ijks}P_{is}P_{jk}\,.
$$
\end{proposition}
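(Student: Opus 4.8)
The plan is to establish this as a Weitzenb\"ock-type identity, whose only structural input is the closedness of $P$ proved in Proposition~\ref{pro:closed}; beyond that, the specific form~\eqref{eq:P} of $P$ plays no further role, so the identity in fact holds for any closed skew-symmetric $2$-tensor field. I would start from the elementary expansion
\[
\frac{1}{2}\De|P|^2 \,=\, |\na P|^2 \,+\, P_{jk}\,\na^i\na_i P_{jk}\,,
\]
which reduces the problem to computing the rough Laplacian $\na^i\na_i P_{jk}$ of $P$ and contracting it against $P$.

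To compute the rough Laplacian, apply $\na^i$ to the first-Bianchi-type identity $\na_i P_{jk}+\na_j P_{ki}+\na_k P_{ij}=0$ of Proposition~\ref{pro:closed}, obtaining
\[
\na^i\na_i P_{jk}\,=\,-\,\na^i\na_j P_{ki}\,-\,\na^i\na_k P_{ij}\,.
\]
In each term on the right I would commute the two covariant derivatives by the Ricci identity for a $(0,2)$-tensor. The commuted derivatives $\na_j\na^i P_{ki}$ and $\na_k\na^i P_{ij}$ produce, up to sign and by the very definition of the divergence $({\rm div}P)_k=\na^i P_{ik}$, the two terms $\na_j({\rm div}P)_k$ and $\na_k({\rm div}P)_j$; the curvature correction from each commutator consists of one contraction of the Riemann tensor against $P$ and one contraction of the Ricci tensor against $P$. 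After contracting the whole expression with $P_{jk}$, the skew-symmetry of $P$ collapses the two divergence terms into $2\langle P\,|\,\na({\rm div}P)\rangle$; moreover, by the pair-exchange symmetry of the Riemann tensor the two Riemann contractions coincide, and likewise the two Ricci contractions coincide. This yields an identity of the shape
\[
\frac{1}{2}\De|P|^2\,=\,|\na P|^2\,+\,2\langle P\,|\,\na({\rm div}P)\rangle\,+\,c_1\,\RRR_{js}P_{sk}P_{jk}\,+\,c_2\,\RRR_{ijks}P_{is}P_{jk}
\]
for explicit constants $c_1,c_2$, where $\RRR_{ijks}$ stands for the components of the Riemann tensor.

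Finally, insert the Kulkarni--Nomizu decomposition
\[
\RRR_{ijks}\,=\,{\rm W}_{ijks}\,+\,\frac{1}{n-2}\big(\RRR_{ik}g_{js}+\RRR_{js}g_{ik}-\RRR_{is}g_{jk}-\RRR_{jk}g_{is}\big)\,-\,\frac{\RRR}{(n-1)(n-2)}\big(g_{ik}g_{js}-g_{is}g_{jk}\big)
\]
into the term $\RRR_{ijks}P_{is}P_{jk}$. Using the skew-symmetry of $P$, the two summands involving $g_{jk}P_{jk}$ and $g_{is}P_{is}$ drop out (a trace of a skew tensor vanishes), the summand with $g_{ik}g_{js}$ yields a multiple of $|P|^2$ through $g_{ik}g_{js}P_{is}P_{jk}=-|P|^2$, and the two remaining Ricci summands yield a multiple of $\RRR_{js}P_{sk}P_{jk}$. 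Combining this with the Ricci contraction already present and collecting the dimensional coefficients produces exactly $2\,{\rm W}_{ijks}P_{is}P_{jk}$, the coefficient $2\frac{n-4}{n-2}$ in front of $\RRR_{js}P_{sk}P_{jk}$, and $\frac{2\RRR}{(n-1)(n-2)}$ in front of $|P|^2$, which is the asserted identity.

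The part requiring care is entirely computational: fixing once and for all the curvature sign conventions and propagating them consistently through the two applications of the Ricci identity, and then carrying out the Kulkarni--Nomizu contractions against the skew tensor $P$ so that the Ricci contributions coming from the commutator step and from the decomposition step combine with the denominators $n-2$ and $(n-1)(n-2)$ to give precisely the stated coefficients --- in particular the factor $\frac{n-4}{n-2}$, whose vanishing at $n=4$ provides a convenient consistency check.
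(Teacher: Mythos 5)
Your proposal follows essentially the same route as the paper: expand $\frac{1}{2}\Delta|P|^2=|\nabla P|^2+P_{jk}\Delta P_{jk}$, trade the rough Laplacian for second derivatives of the other cyclic terms via the closedness identity of Proposition~\ref{pro:closed}, commute derivatives with the Ricci identity to produce $2\langle P\,|\,\nabla(\operatorname{div}P)\rangle$ plus Ricci and Riemann contractions, and finally insert the Weyl decomposition of the Riemann tensor, using the skew-symmetry of $P$ to kill the trace terms. The plan and the resulting coefficients match the paper's computation, so the proposal is correct and not a genuinely different argument.
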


\begin{proof}
We perform our computations with respect to normal coordinates.
Exploiting Proposition~\ref{pro:closed} and the skew-symmetry of $P$, we compute
\begin{align*}
\De |P|^2\,&=\,2\na_i(P_{jk}\na_i P_{jk})
\\
&=\,2|\na P|^2+2P_{jk}\De P_{jk}
\\
&=\,2|\na P|^2-2P_{jk}\nana_{ij} P_{ki}-2P_{jk}\nana_{ik}P_{ij}
\\
&=\,2|\na P|^2+4P_{jk}\nana_{ij} P_{ik}
\\
&=\,2|\na P|^2+4P_{jk}\left(\nana_{ji} P_{ik}+\RRR_{ijis}P_{sk}+\RRR_{ijks}P_{is}\right)
\\
&=\,2|\na P|^2+4P_{jk}\left(\na_j({\rm div}\, P)_k+\RRR_{js}P_{sk}+\RRR_{ijks}P_{is}\right)
\,.
\end{align*}
To obtain the claimed identity, it is now enough to substitute the general formula for the Riemann tensor
$$
\RRR_{ijks}=-\frac{\RRR}{(n-1)(n-2)}(g_{ik} g_{js}-g_{is}g_{jk})+\frac{1}{n-2}\left(\RRR_{ik} g_{js}-\RRR_{is}g_{jk}+g_{ik} \RRR_{js}-g_{is}\RRR_{jk}\right)+\mathrm{W}_{ijks}
$$
in the computation above.
\end{proof}

The differential identity obtained in the previous proposition simplifies significantly when $n=3$, since in this case the Weyl tensor vanishes and we get
\begin{equation}
\label{eq:bochner}
\frac{1}{2}\De|P|^2\,=\,|\na P|^2+2\langle P\,|\,\na ({\rm div}\,P)\rangle+\RRR|P|^2-2\RRR_{js}P_{sk} P_{jk}\,.
\end{equation}

\subsection{Application to $3$-dimensional static solutions}
\label{sub:byparts}

In~\cite{Xu_Ye} a classification result for $3$-dimensional static metrics with positive scalar curvature was proposed, building on the above Bochner-type formula and on the validity of estimate~\eqref{eq:false}. For completeness, here we retrace their proof.

Using formula~\eqref{eq:Static}, we can substitute the Ricci tensor in~\eqref{eq:bochner}, getting
\begin{equation}
\label{eq:einsteintype_bochner}
\frac{1}{2}\De|P|^2\,=\,|\na P|^2+2\langle P\,|\,\na ({\rm div}\,P)\rangle+\frac{\RRR}{2}|P|^2+\frac{2}{f}P(\na f,{\rm div}\,P)-\frac{1}{2f}\langle\na f\,|\,\na|P|^2\rangle\,,
\end{equation}
which can be rewritten as
$$
\frac{1}{2}{\rm div}(f|P|^2)\,=\,f|\na P|^2+2f\langle P\,|\,\na ({\rm div}\,P)\rangle+\frac{\RRR}{2}\,f\,|P|^2+2P(\na f,{\rm div}\,P)\,.
$$
Since $M$ is compact and $f=0$ on $\pa M$, integrating by parts we obtain then
$$
0\,=\,\int_M\left[f|\na P|^2-2f|{\rm div}\,P|^2+\frac{\RRR}{2}\,f\,|P|^2\right]d\mu\,.
$$
Here one can appreciate the strength of estimate~\eqref{eq:mainP}. Indeed, if~\eqref{eq:mainP} is in force and $\RRR>0$, then $|P|^2$ must vanish identically and we obtain the following
\begin{proposition}
Let $(M,g,f)$ be a compact three-dimensional static solution with positive scalar curvature and nonempty boundary. Assume that $f=0$ on $\pa M$ and positive in the interior. If estimate~\eqref{eq:mainP} holds for some $P$ as in~\eqref{eq:P}, then $P$ must vanish identically and one has
$$
df \otimes d|\na f|^2 = d|\na f|^2  \otimes df \, .
$$
\end{proposition}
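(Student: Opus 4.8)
The plan is to feed the assumed estimate \eqref{eq:mainP} into the integral identity obtained just above the statement and then read off the conclusion from a sign count. Recall how that identity is produced: one multiplies the Bochner-type identity \eqref{eq:einsteintype_bochner} by $f$ and rewrites its first-order terms as divergences, using
\[
\na_j\!\big(f\,P_{jk}({\rm div}\,P)_k\big)\,=\,P(\na f,{\rm div}\,P)+f|{\rm div}\,P|^2+f\langle P\,|\,\na({\rm div}\,P)\rangle\,,
\]
which turns \eqref{eq:einsteintype_bochner} into
\[
\tfrac12{\rm div}\!\big(f\na|P|^2\big)-2\,{\rm div}\!\big(f\,P(\,\cdot\,,{\rm div}\,P)\big)\,=\,f|\na P|^2-2f|{\rm div}\,P|^2+\tfrac{\RRR}{2}f|P|^2\,.
\]
Integrating over the compact manifold $M$, all boundary contributions carry a factor of $f$ and hence vanish since $f=0$ on $\pa M$ -- irrespective of the behaviour of $P$ near $\pa M$ -- and one is left with
\[
0\,=\,\int_M\Big(f|\na P|^2-2f|{\rm div}\,P|^2+\tfrac{\RRR}{2}f|P|^2\Big)\,d\mu\,.
\]

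Next I would run the sign argument. Since $f\ge 0$ on $M$, the assumed inequality \eqref{eq:mainP} yields $f\big(|\na P|^2-2|{\rm div}\,P|^2\big)\ge 0$ at every point of $M$: this is exactly the hypothesis on the open set $\{P\neq 0\}$, it is trivially true on the interior of $\{P=0\}$, and it passes to the remaining points by continuity of $|\na P|^2-2|{\rm div}\,P|^2$. Since moreover $\RRR>0$ and $f\ge 0$, the term $\tfrac{\RRR}{2}f|P|^2$ is nonnegative as well. Thus the integrand is a sum of two nonnegative continuous functions with vanishing integral, so each of them vanishes identically; in particular $f|P|^2\equiv 0$. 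As $f>0$ in the interior of $M$, this forces $|P|\equiv 0$ there, and $P\equiv 0$ on all of $M$ by continuity.

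It remains to unwind definition \eqref{eq:P}: the relation $P\equiv 0$ says $\lambda(f)\big[df\otimes d|\na f|^2-d|\na f|^2\otimes df\big]=0$, so the claimed identity holds on the open set where $\lambda(f)\neq 0$. For the function $\lambda(f)=1/(2f)$ that is the one relevant to three-dimensional static solutions, this set is the whole interior $\{f>0\}$ -- and there $P$ is even smooth up to $\pa M$, the pole of $1/(2f)$ being cancelled by the factor $f$ in $d|\na f|^2=2f\,\big(\Ric-\tfrac{\RRR}{2}g\big)(\na f,\cdot)$ coming from \eqref{eq:Static} -- so $df\otimes d|\na f|^2=d|\na f|^2\otimes df$ on $M$ by continuity. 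No genuine obstacle is expected in this scheme; the one step that must be carried out carefully is the passage to divergence form, arranged so that \emph{every} boundary term carries a factor of $f$ and therefore disappears without any boundary hypothesis on $P$.
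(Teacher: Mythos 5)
Your proposal is correct and follows essentially the same route as the paper: multiply the Bochner-type identity \eqref{eq:einsteintype_bochner} by $f$, recast the first-order terms as divergences so that every boundary term carries a factor of $f$, integrate to get $0=\int_M\big(f|\na P|^2-2f|{\rm div}\,P|^2+\tfrac{\RRR}{2}f|P|^2\big)\,d\mu$, and conclude $P\equiv 0$ from the sign of the integrand under \eqref{eq:mainP} and $\RRR>0$. The only difference is that you spell out details the paper leaves implicit (the exact divergence rearrangement, the extension of \eqref{eq:mainP} to the zero set of $P$ by continuity, and the cancellation of the $1/(2f)$ pole via the static equation), all of which are handled correctly.
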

This is a crucial step in the strategy outlined in~\cite{Xu_Ye}. As anticipated,  they exploit the identity $P=0$ in combination with the static equation to deduce that the Cotton tensor must vanish. The classification follows, 
invoking a well known result by Kobayashi~\cite{Kobayashi} and Lafontaine~\cite{Lafontaine}.

As we are going to see in the next sections, it is not clear how to establish the validity of~\eqref{eq:mainP} in general, however we will prove in the appendix that the weaker lower bound $|\na P|^2\,\geq\, | {\rm div} P|^2$ holds true. This leads to 
$$
\int_M  f|{\rm div}\,P|^2 d\mu \, \geq
 \int_M \frac{\RRR}{2}\,f\,|P|^2  d\mu\,.
$$ 
Building on this integral inequality, one might classify three-dimensional static metrics with positive scalar curvature admitting a divergence-free $P$-tensor.

\section{The issue in the proof of the estimate}
\label{sec:issue}

Here we retrace the proof of estimate~\eqref{eq:main} originally proposed in~\cite[Lemma~5.5]{Hwang_Yun}, pointing out the main issue in the argument. 

As a first step, the authors find a local orthonormal frame with respect to which the tensor $P$ has a nice structure. This part of the proof appears to be correct and it is an interesting fact on its own that will also be helpful in the appendix, so we include it here as a lemma. In the following statement it is helpful to consider the vector valued 1-form $A:TM\to TM$ defined by $P(X,Y)=g(AX,Y)$. In coordinates: $A_i^j=g^{jm}P_{im}$. 
 
\begin{lemma}
\label{le:frame}
Let $(M,g)$ be a $n$-dimensional Riemannian manifold. Let $f\in\mathscr{C}^\infty(M)$ and let $P$ be the tensor defined by~\eqref{eq:P}. Let $x\in M$ be a point with $|P|(x)\neq 0$. Then in a small neighborhood $U$ of $x$ it holds $|P|\neq 0$, $|\na f|\neq 0$, $|A\na f|\neq 0$ and there exists a smooth orthonormal frame $\{E_1,\dots,E_n\}$ with $E_1=\na f/|\na f|$ and $E_2=AE_1/|AE_1|$. With respect to this frame, the tensor $P$ rewrites as
\begin{equation}
\label{eq:frame}
P\,=\,u\left(\theta^1\otimes\theta^2-\theta^2\otimes\theta^1\right)\,,
\end{equation}
where $u$ is a smooth function and $\{\theta^1,\dots,\theta^n\}$ is the dual coframe of $\{E_1,\dots,E_n\}$ (namely, $\theta^i(E_j)=\delta_j^i$ at any point in $U$).
\end{lemma}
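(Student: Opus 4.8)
The plan is to localize everything near $x$, identify the rank-two image distribution of $P$, and then build the frame by completing an orthonormal pair via Gram--Schmidt. I would first set $\alpha := \lambda(f)\,df$ and $\beta := d|\nabla f|^2$, so that $P = \alpha\otimes\beta-\beta\otimes\alpha$; this tensor is nonzero at a point exactly when $\alpha$ and $\beta$ are linearly independent there. By continuity of $P$, from $|P|(x)\neq0$ one gets $|P|\neq0$ on a neighborhood $U$ of $x$, and on $U$ the forms $\alpha$ and $\beta$ are pointwise linearly independent; in particular $\lambda(f)\neq0$ and $df\neq0$ on $U$, so $|\nabla f|\neq0$ and $E_1 := \nabla f/|\nabla f|$ is a smooth unit vector field. (That $df(x)\neq0$ is in fact automatic, since $d|\nabla f|^2 = 2\,\nabla^2 f(\nabla f,\cdot)$ vanishes wherever $df$ does, so $df(x)=0$ would force $P(x)=0$.)

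Next I would analyze the endomorphism $A$. From $P(X,Y)=g(AX,Y)$ one reads off $AX=\alpha(X)\,\beta^\sharp-\beta(X)\,\alpha^\sharp$, where $\sharp$ denotes the metric dual; hence $A\nabla f=\lambda(f)|\nabla f|^2\,\beta^\sharp-\beta(\nabla f)\,\alpha^\sharp$. Since $\lambda(f)|\nabla f|^2\neq0$ on $U$ and $\alpha^\sharp,\beta^\sharp$ are independent there, $A\nabla f\neq0$ on $U$, which is the claim $|A\nabla f|\neq0$ and makes $E_2 := AE_1/|AE_1|$ a well-defined smooth unit vector field. Skew-symmetry of $P$ yields $g(AE_1,E_1)=P(E_1,E_1)=0$, so $E_2\perp E_1$. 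Completing the pair $\{E_1,E_2\}$ to a smooth orthonormal frame $\{E_1,\dots,E_n\}$ on a possibly smaller neighborhood is standard: choose local coordinates so that, after relabelling, $E_1,E_2,\partial_3,\dots,\partial_n$ form a basis at $x$ (hence on a neighborhood), and run Gram--Schmidt, which preserves smoothness.

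Finally I would compute $P$ in the dual coframe $\{\theta^1,\dots,\theta^n\}$. As $\alpha^\sharp=\lambda(f)\nabla f=\lambda(f)|\nabla f|\,E_1$, lowering the index gives $\alpha=\lambda(f)|\nabla f|\,\theta^1$. Moreover $\beta^\sharp$ lies in $\mathrm{span}(\alpha^\sharp,\beta^\sharp)=\mathrm{span}(\alpha^\sharp,A\nabla f)=\mathrm{span}(E_1,E_2)$, because $A\nabla f$ is a combination of $\alpha^\sharp$ and $\beta^\sharp$ with nonzero $\beta^\sharp$-component; hence $\beta=\beta(E_1)\,\theta^1+\beta(E_2)\,\theta^2$. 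Substituting into $P=\alpha\otimes\beta-\beta\otimes\alpha$, the $\theta^1\otimes\theta^1$ contributions cancel and one is left with $P=u\,(\theta^1\otimes\theta^2-\theta^2\otimes\theta^1)$ with $u:=\lambda(f)|\nabla f|\,\beta(E_2)$, a smooth function on $U$; note $|P|^2=2u^2$, so $u$ is in fact nowhere vanishing on $U$.

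I do not expect a genuine obstacle here: the heart of the matter is the elementary linear algebra that $P$ has rank two with image $\mathrm{span}(\nabla f,A\nabla f)$, together with the remark that $d|\nabla f|^2$ is proportional to $\nabla^2 f(\nabla f,\cdot)$ and hence vanishes where $df$ does, which makes the non-degeneracy of $\nabla f$ near $x$ automatic. The only point calling for mild care is the local completion of a partial orthonormal frame to a full one, which is routine.
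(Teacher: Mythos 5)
Your proof is correct and follows essentially the same route as the paper's: localize where $|P|\neq 0$, observe that $P$ is a rank-two form supported on $\mathrm{span}(\nabla f, A\nabla f)$, adapt an orthonormal frame to this two-plane, and read off the single surviving component. The only cosmetic difference is that you work directly with the decomposition $P=\alpha\otimes\beta-\beta\otimes\alpha$ and the linear independence of $\alpha,\beta$, whereas the paper first completes $E_1$ to an auxiliary frame and checks that $P(\widetilde E_i,\widetilde E_j)=0$ for $i,j\geq 2$; both arguments establish the same facts.
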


\begin{proof}
A proof of this fact is given in~\cite{Hwang_Yun}, however we write here a shorter self contained version.

We first construct the orthonormal frame in the lemma. Consider a neighborhood $U$ of $x$ in which $|P|\neq 0$. From the definition~\eqref{eq:P} of $P$, it is clear that $|\na f|\neq 0$ in $U$ as well. In particular the vector $E_1=\na f/|\na f|$ is well defined in $U$. We complete $E_1$ to an orthonormal frame $\{E_1,\widetilde{E}_2,\dots,\widetilde{E}_n\}$ in $U$. Since $g(E_1,\widetilde{E}_i)=0$ for $i\geq 2$, we have $\na_{\widetilde{E}_i} f=0$ for any $i\geq 2$, hence
\begin{equation}
\label{eq:P_ij}
P(\widetilde{E}_i,\widetilde{E}_j)\,=\,
\lambda(f)\left(\na_{\widetilde{E}_i} f\,\na_{\widetilde{E}_j}|\na f|^2\,-\,\na_{\widetilde{E}_i}|\na f|^2\,\na_{\widetilde{E}_j} f\right)\,=\,0\,,
\end{equation}
for any $i,j\geq 2$. Since $|P|\neq 0$ in $U$, then at any point in $U$ it holds $g(AE_1,\widetilde{E}_j)=P(E_1,\widetilde{E}_j)\neq 0$ for some $j$. In particular $AE_1\neq 0$ in $U$. Since $g(AE_1,E_1)=P(E_1,E_1)=0$, it follows that $AE_1$ is orthogonal to $E_1$. In particular, the vector $E_2=AE_1/|AE_1|$ is well defined and orthonormal to $E_1$ on the whole $U$. We can then complete $E_1,E_2$ to an orthonormal frame $\{E_1,\dots,E_n\}$ in $U$. This is precisely the orthonormal frame described in the statement of the lemma.
Notice in particular that
$$
P(E_1,E_j)\,=\,g(AE_1,E_j)\,=\,|AE_1|\,g(E_2,E_j)\,=\,|AE_1|\,\delta_{2j}\,.
$$
In view of~\eqref{eq:P_ij}, we deduce that the only nonzero entries of $P$ are $P(E_1,E_2)=-P(E_2,E_1)$. Formula~\eqref{eq:frame} follows.
%
%
%
%
\end{proof}

Next, the authors compute $|\na P|^2$ and $|{\rm div}\,P|^2$ with respect to this frame.  The computations regarding  $|\na P|^2$ appear to be correct. On the other hand, it seems to us that the expression of the divergence term worked out by the authors contains a mistake. A simple calculation (see the appendix for more details) gives
\begin{equation}
\label{eq:true}
\begin{aligned}
({\rm div}\,P)(E_1)\,&=\,-E_2(u)\,+\,\sum_{i=3}^n\langle \na_{E_i}E_i\,|\,E_2\rangle \,u
\,=\,-E_2(u)\,+\,\sum_{i=3}^n\langle E_i\,|\,[E_2,E_i]\rangle \,u\,,
\\
({\rm div}\,P)(E_2)\,&=\,E_1(u)\,-\,\sum_{i=3}^n\langle \na_{E_i}E_i\,|\,E_1\rangle \,u\,
\,=\,E_1(u)\,+\,\sum_{i=3}^n\langle E_i\,|\,[E_1,E_i]\rangle \,u\,,
\\
({\rm div}\,P)(E_k)
\,&=\,\langle E_k\,|\,[E_1,E_2]\rangle \,u\,,\quad k\geq 3\,.
\end{aligned}
\end{equation}
It is worth pointing out that the frame $\{E_1,\dots,E_n\}$ was constructed with a pointwise argument. The frame is easily seen to be smooth, but it is important to observe that it is not necessarily induced from a local coordinate system. In particular, the Lie brackets $[E_i,E_j]$ are not necessarily vanishing. This seems to be the core of the issue: in fact, the authors claim that 
\begin{equation}
\label{eq:false}
{\rm div}\,P\,=-E_2(u)\theta^1+E_1(u)\theta^2\,.
\end{equation}
In view of~\eqref{eq:true}, this formula appears to be incorrect whenever the Lie brackets do not vanish.

\begin{remark}
In~\cite{Hwang_Yun}, and more precisely in the final page of the proof of \cite[Lemma~5.5]{Hwang_Yun} this formula is written as 
$\delta \omega=E_2(u)\theta^1-E_1(u)\theta^2$. As already observed, $\omega$ corresponds to our $P$ in the formalism of the differential forms, and the codifferential $\delta$ is clearly related to the divergence through the formula $\delta \omega=-{\rm div} P$. 
\end{remark}

%

\section{Counterexamples to estimate~\eqref{eq:main}}
\label{sec:counterexample}

We work in dimension $3$ for simplicity, but similar counterexamples might be constructed in higher dimension as well. Consider local coordinates $\{r,x^1,x^2\}$ defined on an open set, a positive smooth function $\phi=\phi(r)$ and the warped product metric
$$
g\,=\,dr\otimes dr+\phi^2 (dx^1\otimes dx^1+ dx^2\otimes dx^2)\,.
$$
Let then $f \in \mathscr{C}^\infty(M)$ be a smooth function of the form $f = \psi \circ x^1$, for some smooth nonconstant real function $\psi$. Let us consider then a skew-symmetric 2-tensor field $P$ as in~\eqref{eq:P}, for some choice of $\lambda$.
In local coordinates, we have that the components of $P$ are given by
$$
P_{\a\b}\,=\,\lambda\left[\na_\a f\nana_{\b\eta}f-\na_\b f\nana_{\a\eta}f\right]g^{\eta\sigma}\na_\sigma f\,=\,
\frac{\lambda\,\psi'}{\phi^2}\left[\na_\a f\nana_{1\b}f-\na_\b f\nana_{1\a}f\right]\,,
$$
where the greek indexes are running in $\{r,1,2\}$.
Here and in what follows we will denote with $'$ the derivatives with respect to $x^1$ and with a dot the derivatives with respect to $r$.
The Christoffel symbols of the metric $g$ are as follows
$$
\Gamma_{rr}^r\,=\,\Gamma_{ri}^r\,=\Gamma_{rr}^i\,=\,\Gamma_{ij}^k\,=\,0\,,\ \ 
\Gamma_{ij}^r\,=\,-\phi\dot\phi\delta_{ij}\,,\ \ 
\Gamma_{ri}^j\,=\,\frac{\dot\phi}{\phi}\delta_i^j\,,
$$
where the latin indexes are running in $\{1,2\}$.
It then follows easily that the only nonzero components of the Hessian are
$$
\nana_{11}f\,=\,\psi''\,,\quad \nana_{1r}f\,=\,-\frac{\dot\phi}{\phi}\,\psi'\,,
$$ 
and that
$$
P\,=\,\lambda\frac{\dot\phi}{\phi^3}{(\psi')^3}\left(dr\otimes dx^1-dx^1\otimes dr\right)\,.
$$
Notice that we are in a setting similar to the one of Section~\ref{sec:issue}, except that our frame 
$$
\{\pa/\pa r,\pa/\pa x^1,\pa/\pa x^2\}
$$ 
is not orthonormal. Hence, to check that our $P$ has the structure prescribed in~\eqref{eq:frame}, one should write its local expression, with respect to an orthonormal frame.  This latter can be obtained setting $E_1=(1/\phi)\pa/\pa x^1$, $E_2=\pa /\pa r$, $E_3=(1/\phi)\pa/\pa x^2$. Its dual orthonormal co-frame is then given by $\theta^1=\phi d x^1, \theta^2=dr, \theta^3=\phi dx^2$. It is easy to check that this frame satisfies the properties described in Lemma~\ref{le:frame} and that 
$$
P\,=\,-\lambda\frac{\dot\phi}{\phi^4}{(\psi')^3}
\left(\theta^1\otimes\theta^2-\theta^2\otimes\theta^1\right)
\,.
$$
However, we prefer to perform our computations with respect to the frame fields induced by the local coordinates $(r,x^1,x^2)$. In this framework, it is easy to show that the only nonzero components of $\na P$ are 
\begin{align*}
\na_r P_{1r}\,&=\,-\left(\frac{\ddot\phi}{\phi^3}-4\frac{\dot\phi^2}{\phi^4}\right)\lambda \,(\psi')^3\,,
\\
\na_1 P_{1r}\,&=\,-\frac{\dot\phi}{\phi^3}(\lambda \, (\psi')^3)'\,,
\\
\na_2 P_{12}\,&=\,-\frac{\dot\phi^2}{\phi^2}\lambda\, (\psi')^3\,.
\end{align*}
It easily follows that 
$$
{\rm div}P\,=\,-\frac{\dot\phi}{\phi^5}(\lambda \, (\psi')^3)' \, dr \, + \, \lambda \, (\psi')^3\left(\frac{\ddot\phi}{\phi^3}-3\frac{\dot\phi^2}{\phi^4}\right)dx^1\,.
$$
Here it is possible to notice the discrepancy between our computations and formula~\eqref{eq:false}, as computing the right hand side of that formula would give
$$
-\frac{\dot\phi}{\phi^5}(\lambda \, (\psi')^3)' \,  dr \, +\, \lambda \, (\psi')^3\left(\frac{\ddot\phi}{\phi^3}-4\frac{\dot\phi^2}{\phi^4}\right)dx^1\,,
$$
which looks very similar, but does not correspond to the correct value of ${\rm div}P$.
Computing the squared norms of $\na P$  and ${\rm div}P$, one finally arrives at
$$
|\na P|^2-2|{\rm div}P|^2\,=\,4\frac{\lambda^2\dot\phi^2(\psi')^6}{\phi^8}\left(4\frac{\dot\phi^2}{\phi^2}-\frac{\ddot \phi}{\phi}\right) \, .
$$
To make this difference negative, it is then sufficient to specify a choice of the functions $\lambda, \psi$ and $\phi$ such that the right hand side is negative. In particular, it is sufficient to choose $\phi$ in such a way that the quantity in round brackets is negative. This can be achieved, for example, setting
$$
\phi\,=\,(r+c)^{-1/k},\qquad \hbox{for some $k>3$ and some $c>0$}\, . 
$$
It follows that, with this choice of $\phi$, for any $\lambda$ and any $f= \psi \circ x^1$, the estimate~\eqref{eq:mainP} does not hold. Hence, the lower bound~\eqref{eq:main} is false as well.


\section*{Appendix}

For completeness, let us point out the correct relation always holding between $|\na P|$ and $|{\rm div}P|$. Let $(M,g)$ be a $n$-dimensional Riemannian manifold, $n\geq 3$. 
As in Section~\ref{sec:issue}, we take a point $x$ with $|P|(x)\neq 0$ and we consider the local orthonormal frame $\{E_1,\dots,E_n\}$ provided by Lemma~\ref{le:frame}. We recall that, with respect to this frame, the tensor $P$ takes the following form
\begin{equation}
\label{eq:frame2}
P\,=\,u\left(\theta^1\otimes\theta^2-\theta^2\otimes\theta^1\right)\,.
\end{equation}
Exploiting the compatibility of $\na$ with the metric $g$, for any $i,j,k$ we have
$$
0=E_i \left(g(E_j,E_k)\right)=g(\na_{E_i}E_j,E_k)+g(E_j,\na_{E_i}E_k)\,,
$$
and in particular
$$
g(\na_{E_i}E_k,E_k)=0\,,\qquad g(\na_{E_i}E_i,E_k)\,=\,-g(E_i,\na_{E_i}E_k)\,=\,-g(E_i,[E_i,E_k])\,.
$$
We are now ready to compute the components of $\na P$. Since $P(E_i,E_j)=0$ whenever $\{i,j\}\neq\{1,2\}$, we have
\begin{align*}
\na_{E_i}P(E_1,E_2)\,&=\,E_i\left(P(E_1,E_2)\right)\,-\,P(\na_{E_i}E_1,E_2)\,-\,P(E_1,\na_{E_i}E_2)
\\
&=\,E_i(u)\,-\,g(\na_{E_i}E_1,E_1)P(E_1,E_2)\,-\,g(\na_{E_i}E_2,E_2)P(E_1,E_2)
\\
&=\,E_i(u)\,.
\end{align*}
Similarly, for any $k\geq 3$, we have
\begin{align*}
\na_{E_i}P(E_1,E_k)\,&=\,E_i(P(E_1,E_k))-\,P(\na_{E_i}E_1,E_k)\,-\,P(E_1,\na_{E_i}E_k)
\\
&=\,-\,g(\na_{E_i}E_k,E_2)P(E_1,E_2)
\\
&=\,-\,g(\na_{E_i}E_k,E_2)\,u\,\,,
\end{align*}
and
\begin{align*}
\na_{E_i}P(E_2,E_k)\,&=\,E_i(P(E_2,E_k))-\,P(\na_{E_i}E_2,E_k)\,-\,P(E_2,\na_{E_i}E_k)
\\
&=\,-\,g(\na_{E_i}E_k,E_1)P(E_2,E_1)
\\
&=\,g(\na_{E_i}E_k,E_1)\,u\,\,.
\end{align*}
Similarly, one computes $\na_{E_i}P(E_1,E_1)=\na_{E_i}P(E_2,E_2)=0$ and $\na_{E_i}P(E_j,E_k)=0$ whenever $j,k$ are $\geq 3$.
It is now easy to compute the divergence of $P$:
\begin{equation*}
\begin{aligned}
({\rm div}\,P)(E_1)\,&=\,-E_2(u)\,+\,\sum_{i=3}^n\langle \na_{E_i}E_i\,|\,E_2\rangle \,u
\,=\,-E_2(u)\,+\,\sum_{i=3}^n\langle E_i\,|\,[E_2,E_i]\rangle \,u\,,
\\
({\rm div}\,P)(E_2)\,&=\,E_1(u)\,-\,\sum_{i=3}^n\langle \na_{E_i}E_i\,|\,E_1\rangle \,u\,
\,=\,E_1(u)\,-\,\sum_{i=3}^n\langle E_i\,|\,[E_1,E_i]\rangle \,u\,,
\\
({\rm div}\,P)(E_i)
\,&=\,-g(\na_{E_1}E_i,E_2)\,u+g(\na_{E_2}E_i,E_1)\,u
\,,\quad i\geq 3\,.
\end{aligned}
\end{equation*}
Using the inequality $(\sum_{i=1}^k x_i)^2\leq k\sum_{i=1}^k x_i^2$, a simple calculation then gives
\begin{align*}
\frac{|{\rm div} P|^2}{n-1}\,&\leq\,\sum_{k=1}^2\left[E_k(u)^2+\sum_{i= 3}^n\langle E_i\,|\,[E_i,E_k]\rangle^2 u^2\right]+\frac{2}{n-1}\sum_{i= 3}^n\left[\langle \na_{E_1}E_i\,|\,E_2\rangle^2+\langle \na_{E_2}E_i\,|\,E_1\rangle^2 \right]u^2
\\
&\leq\,\sum_{k=1}^2\left[E_k(u)^2+\sum_{i= 3}^n\langle E_i\,|\,[E_i,E_k]\rangle^2 u^2\right]+\sum_{i= 3}^n\left[\langle \na_{E_1}E_i\,|\,E_2\rangle^2+\langle \na_{E_2}E_i\,|\,E_1\rangle^2 \right]u^2\,.
\end{align*}
On the other hand
\begin{align*}
\frac{1}{2}|\na P|^2&\geq\,\sum_{k=1}^2\left[\left(\na_{E_k}P(E_1,E_2)\right)^2+\sum_{i= 3}^n\left(\na_{E_i}P(E_k,E_i)\right)^2+\sum_{i= 3}^n\left(\na_{E_k}P(E_k,E_i)\right)^2\right]
\\
&=\,\sum_{k=1}^2\left[E_k(u)^2+\sum_{i= 3}^n\langle E_i\,|\,[E_i,E_k]\rangle^2 u^2\right]+\sum_{i= 3}^n\left[\langle \na_{E_1}E_i\,|\,E_2\rangle^2+\langle \na_{E_2}E_i\,|\,E_1\rangle^2 \right]u^2\,.
\end{align*}

In conclusion, we have shown the following.
\begin{proposition}
Let $(M,g)$ be a $n$-dimensional Riemannian manifold, $n\geq 3$. Let $f\in\mathscr{C}^\infty(M)$ and let $P$ be the tensor defined by~\eqref{eq:P}. Then, at any point of $M$ it holds
\begin{equation}
\label{eq:truebound}
|\na P|^2\,\geq\,\frac{2}{n-1}\,|{\rm div}P|^2\,.
\end{equation}
\end{proposition}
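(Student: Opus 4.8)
The plan is to argue pointwise. First I would dispose of the zero locus of $P$: on the interior of $\{|P|=0\}$ the tensors $\na P$ and ${\rm div}\,P$ vanish identically, so~\eqref{eq:truebound} is trivial there, while on the boundary of that locus it follows by continuity from the open set $\{|P|\neq 0\}$, since $\partial\{|P|=0\}\subseteq\overline{\{|P|\neq 0\}}$. Hence it suffices to prove~\eqref{eq:truebound} at a point $x$ with $|P|(x)\neq 0$, where Lemma~\ref{le:frame} provides the orthonormal frame $\{E_1,\dots,E_n\}$ on a neighbourhood $U$, with $P$ in the normal form~\eqref{eq:frame2}.

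On $U$ I would compute every component $\na_{E_i}P(E_j,E_k)$. Because $P(E_j,E_k)=0$ unless $\{j,k\}=\{1,2\}$, the Leibniz rule together with the metric-compatibility identities $g(\na_{E_i}E_k,E_k)=0$ and $g(\na_{E_i}E_i,E_k)=-g(E_i,[E_i,E_k])$ (the second using that $\na$ is torsion-free) reduces the bookkeeping to $\na_{E_i}P(E_1,E_2)=E_i(u)$, to $\na_{E_i}P(E_1,E_k)=-g(\na_{E_i}E_k,E_2)\,u$ and $\na_{E_i}P(E_2,E_k)=g(\na_{E_i}E_k,E_1)\,u$ for $k\geq 3$, all other components being zero. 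Contracting the first slot then produces the three lines of~\eqref{eq:true} for ${\rm div}\,P$; the Lie-bracket terms there are exactly the ones that were dropped in the flawed formula~\eqref{eq:false}.

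The estimate itself is then a pair of one-sided comparisons. On one side, I apply the elementary inequality $(\sum_{i=1}^{k}x_i)^2\leq k\sum_{i=1}^{k}x_i^2$ to $({\rm div}\,P)(E_1)$ and $({\rm div}\,P)(E_2)$, which have $n-1$ summands each, and to $({\rm div}\,P)(E_i)$, $i\geq 3$, which have two summands each; discarding the resulting favourable factor $2/(n-1)\leq 1$ bounds $|{\rm div}\,P|^2/(n-1)$ by a sum of squares of $E_1(u),E_2(u)$, of $\langle E_i\,|\,[E_i,E_k]\rangle\,u$, and of $\langle\na_{E_1}E_i\,|\,E_2\rangle\,u$ and $\langle\na_{E_2}E_i\,|\,E_1\rangle\,u$. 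On the other side, in $\tfrac12|\na P|^2=\sum_{j<k}\sum_i\big(\na_{E_i}P(E_j,E_k)\big)^2$ I keep only the terms with $(j,k)=(1,2)$ and $i\in\{1,2\}$, and, for $k\in\{1,2\}$ and $i\geq 3$, the terms $\big(\na_{E_i}P(E_k,E_i)\big)^2$ and $\big(\na_{E_k}P(E_k,E_i)\big)^2$; these reproduce exactly the same sum of squares. Comparing the two inequalities gives $\tfrac12|\na P|^2\geq|{\rm div}\,P|^2/(n-1)$, which is~\eqref{eq:truebound}.

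I expect the only delicate point to be the bookkeeping in the last step: one must select precisely the right subfamily of components of $\na P$ so that the retained lower bound matches, term by term, the output of the Cauchy--Schwarz step applied to ${\rm div}\,P$ --- keeping track of the multiplicity of the $E_k(u)$ terms and of the sign-irrelevant identity $g(\na_{E_i}E_i,E_k)^2=\langle E_i\,|\,[E_i,E_k]\rangle^2$. This is also where the constant degrades from the claimed $2$ in~\eqref{eq:mainP} to $2/(n-1)$: the loss is entirely the factor $n-1$ spent in Cauchy--Schwarz on the $(n-1)$-term divergence components $({\rm div}\,P)(E_1)$, $({\rm div}\,P)(E_2)$, and it genuinely cannot be recovered once the frame has nonvanishing Lie brackets --- which, as Section~\ref{sec:counterexample} shows, is exactly what happens in the counterexamples.
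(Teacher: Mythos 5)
Your proposal is correct and follows essentially the same route as the paper's own proof: the same reduction to the nonvanishing locus of $P$ via continuity, the same frame from Lemma~\ref{le:frame} and normal form~\eqref{eq:frame2}, the same computation of the components of $\na P$ and of ${\rm div}\,P$ including the Lie-bracket terms, and the same pair of one-sided bounds via $(\sum_{i=1}^k x_i)^2\leq k\sum_{i=1}^k x_i^2$ on one side and a term-by-term selection from $|\na P|^2$ on the other. The bookkeeping you flag as delicate is exactly the bookkeeping carried out in the appendix, and your accounting of where the factor $n-1$ is lost matches the paper's.
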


\begin{proof}
Estimate~\eqref{eq:truebound} follows immediately from the computations above at any point where $P$ has the form~\eqref{eq:P}, that is, at any point where $|P|\neq 0$. Let then $x$ be a point where $|P|=0$. If $|P|$ vanishes identically in a neighborhood of $x$, then $|\na P|=|{\rm div}\,P|=0$ in that neighborhood, and inequality~\eqref{eq:truebound} is trivially satisfied. Otherwise there exists a sequence of points $x_i$ converging to $x$ with $|P|(x_i)\neq 0$. Since estimate~\eqref{eq:truebound} holds at the points $x_i$, then it must hold at $x$ as well by continuity.
\end{proof}

%

\subsection*{Acknowledgements}
{\em The authors would like to thank R. Beig, P. T. Chru\'sciel and W. Simon for stimulating discussions about the classification of static vacuum spacetimes. The authors are members of the Gruppo Nazionale per l'Analisi Matematica, la Probabilit\`a e le loro Applicazioni (GNAMPA) of the Istituto Nazionale di Alta Matematica (INdAM).
}


\bibliographystyle{plain}
\bibliography{biblio}

\begin{thebibliography}{1}

\bibitem{Hwang_Santos_Yun}
S.~Hwang, M.~Santos, and G.~Yun.
\newblock Closed generalized {E}instein manifolds with positive isotropic
  curvature.
\newblock {\em arXiv preprint arXiv:2108.10675}, 2021.

\bibitem{Hwang_Yun_2}
S.~Hwang and G.~Yun.
\newblock Vacuum static spaces with positive isotropic curvature.
\newblock {\em arXiv preprint arXiv:2103.15818}, 2021.

\bibitem{Hwang_Yun}
S.~Hwang and G.~Yun.
\newblock Besse conjecture with positive isotropic curvature.
\newblock {\em Annals of Global Analysis and Geometry}, pages 1--26, 2022.

\bibitem{Kobayashi}
O.~Kobayashi.
\newblock A differential equation arising from scalar curvature function.
\newblock {\em J. Math. Soc. Japan}, 34(4):665--675, 1982.

\bibitem{Lafontaine}
J.~Lafontaine.
\newblock Sur la g\'eom\'etrie d'une g\'en\'eralisation de l'\'equation
  diff\'erentielle d'{O}bata.
\newblock {\em J. Math. Pures Appl. (9)}, 62(1):63--72, 1983.

\bibitem{Xu_Ye}
X.~Xu and J.~Ye.
\newblock Closed three-dimensional vacuum static spaces.
\newblock {\em Inventiones mathematicae}, pages 1--17, 2022.

\bibitem{Gabjin_Seungsu}
G.~Yun and S.~Hwang.
\newblock V-static spaces with positive isotropic curvature.
\newblock {\em arXiv preprint arXiv:2103.16039}, 2021.

\bibitem{Hwang_Yun_1}
G.~Yun and S.~Hwang.
\newblock Critical point equation on three-dimensional manifolds and the
  {B}esse conjecture.
\newblock {\em arXiv preprint arXiv:2208.10887}, 2022.

\end{thebibliography}

\end{document}